\documentclass[12pt]{amsart}
\usepackage{amsmath}
\allowdisplaybreaks[1] 
\usepackage{amssymb}

\vfuzz2pt 
\hfuzz2pt 
\newtheorem{thm}{Theorem}[section]

\newtheorem{lemma}[thm]{Lemma}
\newtheorem{prop}[thm]{Proposition}
\theoremstyle{definition}
\newtheorem{defn}[thm]{Definition}
\newtheorem{rem}[thm]{Remark}


\begin{document}
\title{Blocking: new examples and properties of products }
\author{Pilar Herreros}
\address{Department of Mathematics, University of Pennsylvania, Philadelphia, PA 19104, USA}
\email{pherrero@math.upenn.edu}
\begin{abstract}
We say that a pair of points $x$ and $y$ is secure if there exist a finite set of blocking points such
that any geodesic between $x$ and $y$ passes through one of the blocking points. The main point of this
paper is to exhibit new examples of blocking phenomena both in the manifold and the billiard table
setting. As an approach to this, we study if the product of secure configurations (or manifolds) is
also secure.

We introduce the concept of \emph{midpoint security} that imposes that the geodesic reaches a blocking
point exactly at its midpoint. We prove that products of midpoint secure configurations are midpoint
secure. On the other hand, we give an example of a compact $C^1$ surface that contains secure
configurations that are not midpoint secure. This surface provides the first example of an insecure
product of secure configurations, as well as billiard table examples.
\end{abstract}
\maketitle

\section{Introduction}

We say that a pair of points $x$ and $y$ is secure if there exist a finite set of blocking points such
that any geodesic between $x$ and $y$ passes through one of the blocking points. The study of this
property originated in the context of polygonal billiards and translation surfaces (see e.g.
\cite{Gu04}, \cite{Gu06}, \cite{GJ}, \cite{Mo05}). It has an interpretation in geometric optics, where
being secure means that one point is shaded from the light emanated by the other point by finitely many
point blocks. Or that they can't "see" each other. Another interpretation relates to the name security,
as we can think of two points being secure if any path between them passes through one of finitely many
check points.

Recently, security has been studied on Riemannian manifolds. In particular for compact, locally
symmetric spaces (see \cite{GS}), and its relation with entropy (see \cite{BG}, \cite{LS}) among
others. Security is a rare phenomenon. The emphasis in the area is to show that small blocking sets
imply very strong conditions on the geometry of the manifold. For instance, on all known examples of
security the blocking occurs at the midpoints of geodesics. We mention some other such results and
conjectures below. The main point of this paper is to exhibit new examples of blocking phenomena both
in the manifold and the billiard table setting. Here the manifolds will be closed $C^1$ smooth
Riemannian manifolds, while the billiard tables we consider are compact $C^1$ smooth Riemannian
manifolds with boundary (with and without corners).

The search for examples and for an understanding of its relationship with other aspects of Riemannian
geometry raises the question of how this property behaves under products of manifolds. This question
was brought to my attention by K. Burns and E. Gutkin, who partially solved it in \cite{BG} where they
prove that if a (configuration on a) product manifold is secure, then so is (its projection to) each
factor. In this paper we study the converse of this result, more precisely if the product of secure
configurations (or manifolds) is also secure.

In section \ref{secMS} we introduce the concept of \emph{midpoint security} that imposes that the
geodesic reaches a blocking point exactly at its midpoint. We discuss its relation with security and
prove that products of midpoint secure configurations are midpoint secure. On the other hand, we give
some cases when this condition is actually necessary to achieve security. In section \ref{secEx} we
build an example of a compact $C^1$ surface that contains secure configurations that are not midpoint
secure. This new example of blocking phenomena also supplies the first example of an insecure product
of secure configurations. We finish the paper using this surface to give examples of non-planar
billiard tables with secure configurations that are not midpoint secure.\\

I would like to give my special thanks to K. Burns and E. Gutkin for their interest in this work.

\section{Security and Midpoint Security}\label{secMS}

Throughout this paper geodesics are assumed to have positive finite length and to be parametrized by
$[0,1]$ proportional to arclength. The length $0$ case, when the geodesic is a point, will not be
considered a geodesic and when needed will be refered to as a \emph{constant path}.

If $\gamma :[0,1]\to M$ is a geodesic, the \emph{endpoints} of $\gamma$ are its \emph{initial point}
$\gamma(0)$ and \emph{final point} $\gamma(1)$. The points $\gamma(t)$ with $t\in (0,1)$ are
\emph{interior points} of $\gamma$. We will say that $\gamma$ \emph{passes through} a point $x\in M$ if
$x$ is an interior point of $\gamma$.\\

A \emph{configuration} in $M$ is an ordered pair of points in $M$, these points may coincide. For a
configuration $(x,y)$ in $M$ we say that a geodesic $\gamma$ \emph{joins} $x$ to $y$ if it has initial
point $x$ and final point $y$. We will denote by $G(x,y)$ the set of all geodesics joining $x$ and $y$.

A set $B$ is a \emph{blocking set} for a collection of geodesics if every geodesic in the collection
passes through a point in $B$.\\

\begin{defn}
A configuration $(x,y)$ is \emph{secure} if the collection $G(x,y)$ of geodesics joining $x$ and $y$
has a finite blocking set. Otherwise the configuration is \emph{insecure}.\\
A Riemannian manifold is \emph{secure} if all configurations in it are secure.
\end{defn}
\smallskip
Note that any geodesic in $G(x,y)$ has to contain a segment that joins $x$ to $y$ without passing
through either endpoint, and this segment is also contained in $G(x,y)$. Blocking this segment will
also block the original geodesic, so we can (and will) always choose the blocking set to be in
$M\setminus \{x,y\}$. This definition is then equivalent to the one used in \cite{BG}, where they only
consider geodesics that don't pass through the endpoints.\\

The first to consider the relation between security and product manifolds were E. Gutkin and V.
Schr\"oder, in \cite{GS} while studying security of locally symmetric spaces they proved that if a
product manifold is secure, so are its factors. In \cite{BG} K. Burns and E. Gutkin proved the
following lemma and used it to give examples of totally insecure manifolds.\\

\begin{lemma}\label{lemmaBG}
If a configuration in a product manifold is secure, then the projection to each factor is secure.

Moreover; if the configuration has blocking set $B$, then the projection of $B$ minus the endpoints is
a blocking set for the projection in each factor.
\end{lemma}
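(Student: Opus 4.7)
The plan is to reduce any geodesic $\gamma_1 \in G(x_1,y_1)$ to a geodesic in the product $M_1\times M_2$ by pairing it with a fixed companion path in the second factor, and then to apply the hypothesis that the finite set $B$ blocks $G(x,y)$. Write $x=(x_1,x_2)$, $y=(y_1,y_2)$, and $B_1:=\pi_1(B)\setminus\{x_1,y_1\}$.

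\emph{Step 1 (reduction to endpoint-avoiding geodesics).} Given $\gamma_1 \in G(x_1,y_1)$, the remark preceding the lemma produces a sub-segment of $\gamma_1$ that still lies in $G(x_1,y_1)$ and that does not meet $x_1$ or $y_1$ at any interior time; blocking the sub-segment blocks the original. So I may assume throughout that $\gamma_1$ avoids $\{x_1,y_1\}$ at all interior times.

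\emph{Step 2 (lifting to the product).} Fix once and for all a geodesic $\gamma_2\in G(x_2,y_2)$ (existence is standard in the compact Riemannian setting of the paper), and take $\gamma_2$ to be the constant path at $x_2$ if $x_2=y_2$. Reparametrize both $\gamma_1$ and $\gamma_2$ on $[0,1]$ proportionally to arclength. Because the product metric makes $\tilde\gamma(t):=(\gamma_1(t),\gamma_2(t))$ have constant speed $\sqrt{L(\gamma_1)^2+L(\gamma_2)^2}$ with each factor being a geodesic or constant, $\tilde\gamma$ is a geodesic from $x$ to $y$, i.e.\ $\tilde\gamma\in G(x,y)$.

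\emph{Step 3 (descent of the blocking point).} By hypothesis there exist $t\in(0,1)$ and $b\in B$ with $\tilde\gamma(t)=b$. Then $\gamma_1(t)=\pi_1(b)\in\pi_1(B)$ is an interior point of $\gamma_1$, and by Step~1, $\pi_1(b)\notin\{x_1,y_1\}$, so $\pi_1(b)\in B_1$. Hence $\gamma_1$ passes through $B_1$. Since $B_1$ is finite (it is a subset of the image of the finite set $B$) and blocks every $\gamma_1\in G(x_1,y_1)$, the projected configuration $(x_1,y_1)$ is secure; the symmetric argument handles $(x_2,y_2)$.

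There is no really hard step here: the only care needed is to make sure the pairing $\tilde\gamma$ is a bona fide geodesic on $[0,1]$ (Step~2) and that the blocking point cannot accidentally sit over $x_1$ or $y_1$ (which is forced by the endpoint-avoiding reduction in Step~1, and which is exactly the reason one must delete the endpoints from $\pi_1(B)$ in the conclusion).
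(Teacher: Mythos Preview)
Your proof is correct and follows essentially the same route as the Burns--Gutkin argument that the paper invokes (and reproduces, in the midpoint version, in the first half of the proof of Lemma~\ref{lemma1}): lift an arbitrary $\gamma_1\in G(x_1,y_1)$ to the product by pairing it with a fixed companion $\gamma_2$ in the second factor, apply the blocking hypothesis upstairs, and project back. Your Step~1 reduction to endpoint-avoiding geodesics is a sensible explicit justification of the ``minus the endpoints'' clause, which the paper leaves implicit via the remark preceding the lemma.
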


To study the converse of this lemma we introduce a related but stronger property.
We will say that a set $B$ is a \emph{midpoint blocking set} for a collection of geodesics if every
geodesic in the collection has its midpoint in $B$.\\

\begin{defn}
A configuration $(x,y)$ is \emph{midpoint secure} if the collection $G(x,y)$ of joining geodesics has a finite midpoint blocking set $B$.\\
A Riemannian manifold is \emph{midpoint secure} if all configurations in it are midpoint secure.
\end{defn}

\smallskip

Unlike the security case, we will allow the endpoints $x$ and $y$ to be in $B$. This is actually
necessary in some cases since, for instance, a simple closed geodesic travelled twice has midpoint
equal to the endpoints.\\

Note that any midpoint secure configuration is also secure, with the same blocking set minus the
endpoints if necessary. On the other hand, all previously known examples of secure configurations on
Riemannian manifolds are midpoint secure. In particular, this can be seen for all configurations in
locally symmetric spaces of euclidean type in \cite{GS}, where the blocking set given is actually
midpoint blocking. This is also true for arithmetic polygonal billiards and translation surfaces (see
\cite{Gu06},\cite{GJ}).

One of the main goals of this area is to characterize certain manifolds by their blocking properties.
The only known secure compact manifolds are flat, and it has been conjectured that these are in fact
the only ones among closed smooth Riemannian manifolds. For more details see K. Burns and E. Gutkin
\cite{BG} and J.-F. Lafont and B. Schmidt \cite{LS}, where they give the conjecture together with some
partial results in this direction.

Another case is that of compact rank one symmetric spaces (CROSS). These manifolds have the property
that any pair of distinct points that are not at distance equal to the diameter are secure, with a
blocking set consisting of only two points. It was conjectured in \cite{LS} that the CROSSes are the
only compact Riemannian manifolds with this property. Part of the evidence they provide is proving that
all Blaschke manifolds have this property. They also conjecture that the only such manifold with the
additional property that for any point $x$, $G(x,x)$ can be blocked by a single point is the round
sphere. This conjecture was later proved by B. Schmidt and J. Souto in \cite{SS}.\\

All the manifolds mentioned above satisfy the corresponding blocking properties if we replace security
by midpoint security. This can be seen in the proofs of security of each case. Moreover, if all secure
compact manifolds are flat as conjectured, then they also are all midpoint secure. Proving any of these
conjectures using midpoint security would be a significant progress. \\

This also raises the question whether these conditions are actually equivalent. This is not true, as we
will see in section \ref{secEx} where we give an example of a surface with secure configurations that
are not midpoint secure.\\ 

\begin{lemma}\label{lemma1}
A configuration in a product manifold is midpoint secure if and only if the projection to each factor
is midpoint secure.
\end{lemma}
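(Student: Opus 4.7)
The plan is to exploit the product structure of geodesics. A curve $\gamma:[0,1]\to M_1\times M_2$ of positive finite length parametrized proportional to arclength is a geodesic if and only if each projection $\gamma_i = \pi_i\circ\gamma$ is either a constant path or a geodesic in $M_i$, sharing the same parameter $t\in[0,1]$. Consequently the midpoint $\gamma(1/2)=(\gamma_1(1/2),\gamma_2(1/2))$ projects to $\gamma_i(1/2)$, which is either the midpoint of a nonconstant factor geodesic or coincides with $x_i=y_i$ when the factor is constant. Write $x=(x_1,x_2)$ and $y=(y_1,y_2)$.

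For the forward direction, let $B\subset M_1\times M_2$ be a finite midpoint blocking set for $G(x,y)$. I will show that $\pi_i(B)$ is a finite midpoint blocking set for $G(x_i,y_i)$. Given any $\gamma_1\in G(x_1,y_1)$, I need to realize it as the first-factor projection of some $\gamma\in G(x,y)$, because then $\gamma(1/2)\in B$ forces $\gamma_1(1/2)\in\pi_1(B)$. If $x_2=y_2$, pair $\gamma_1$ with the constant path at $x_2$; the resulting curve has the length of $\gamma_1$, so it is a genuine geodesic in $G(x,y)$. If $x_2\neq y_2$, pair $\gamma_1$ with any fixed $\tilde\gamma_2\in G(x_2,y_2)$; such a $\tilde\gamma_2$ exists whenever $G(x,y)\neq\emptyset$, and otherwise the claim is vacuous. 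Either way, $\gamma_1(1/2)\in\pi_1(B)$, and $\pi_i(B)$ is finite.

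For the backward direction, let $B_i$ be a finite midpoint blocking set for $G(x_i,y_i)$, and define $B_i'=B_i\cup\{x_i\}$ when $x_i=y_i$, and $B_i'=B_i$ otherwise. Then $B_1'\times B_2'$ is finite. For any $\gamma\in G(x,y)$, each factor $\gamma_i$ is either a geodesic in $G(x_i,y_i)$, whose midpoint lies in $B_i\subset B_i'$, or a constant path, which forces $x_i=y_i$ and yields midpoint $x_i\in B_i'$. Hence $\gamma(1/2)\in B_1'\times B_2'$, so the product is a finite midpoint blocking set for $G(x,y)$.

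The main obstacle is merely the bookkeeping around constant factors: a product geodesic can have one trivial factor, which on the backward side forces us to adjoin the common endpoint $x_i$ to the factor blocking set, and on the forward side requires either a constant extension or coupling with a fixed partner geodesic to realize a factor geodesic inside the product. Beyond this, no geometric input is needed other than the splitting of geodesics in a Riemannian product.
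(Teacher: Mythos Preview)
Your proof is correct and follows the same route as the paper's: for the forward direction project the midpoint blocking set by pairing each factor geodesic with a fixed partner (or constant path) in the other factor, and for the converse take the product of the factor blocking sets, augmented by $x_i$ when $x_i=y_i$. The only wrinkle is the phrase ``otherwise the claim is vacuous,'' which is not literally right (an empty $G(x,y)$ does not make the statement about $(x_1,y_1)$ vacuous), but in the paper's setting of closed manifolds a connecting $\tilde\gamma_2$ always exists, so this case never arises---and the paper's own proof tacitly assumes the same.
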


\begin{proof} (The first half follows Burns and Gutkin's argument for Lemma \ref{lemmaBG}, we will include it here for completeness.)
Let $x=(x_1,x_2)$ and $y=(y_1,y_2)$ be two points in the product manifold $M_1\times M_2$. Any geodesic
$\gamma$ joining $x$ and $y$ projects to geodesics $\gamma_i=\pi_i(\gamma)$ in $M_i$ joining $x_i$ and
$y_i$, we will denote this by $\gamma=\gamma_1\times \gamma_2$. Note that one $\gamma_i$ might be a
constant path, but the argument works regardless.

Suppose that the configuration $(x,y)$ is midpoint secure, and let $B$ be its midpoint blocking set.
Let $B_1=\pi_1(B)$ be the projection of this set to $M_1$, we claim that this is a blocking set for
$(x_1, y_1)$. To see this take any geodesic $\sigma$ in $M_2$ from $x_2$ to $y_2$. For any geodesic
$\gamma_1 \in G(x_1,y_1)$ joining $x_1$ and $y_1$, the geodesic $\gamma_1 \times \sigma$ joins $x$ and
$y$ in $M_1\times M_2$ and therefore is midpoint blocked by $B$. This means that $\gamma_1(1/2) \times
\sigma(1/2)\in B$, so $\gamma_1(1/2)\in B_1$, and since $\gamma_1$ is arbitrary this proves that the
configuration $(x_1,y_1)$ is midpoint secure in $M_1$. Reversing the roles of $M_1$ and $M_2$ we see
that the configuration $(x_2,y_2)$ is midpoint secure in $M_2$.  \\

To prove the converse lets assume that $(x_1,y_1)$ and $(x_2, y_2)$ are midpoint secure
in $M_1$ and $M_2$ respectively, and let $B_1$ and $B_2$ be the corresponding blocking sets. If
$x_i=y_i$ add this point to $B_i$, so that the constant path joining them is also midpoint blocked by
$B_i$. Let $x=(x_1,x_2)$ and $y=(y_1,y_2)$ in the product manifold $M_1\times M_2$, we will see that
the set $B=B_1\times B_2$ is a midpoint blocking set for $G(x,y)$.  For this take any $\gamma \in
G(x,y)$ and write it in the form $\gamma=\gamma_1\times \gamma_2$ as above. Since $\gamma_i$ joins
$x_i$ and $y_i$, it is midpoint blocked by $B_i$, that is, $\gamma_i(1/2) \in B_i$ and therefore
$\gamma_1(1/2) \times \gamma_2(1/2)\in B_1\times B_2 =B$, completing the proof.
\end{proof}

From this Lemma immediately follows that:\\

\begin{prop}
A product manifold is midpoint secure if and only if each factor is midpoint secure. \\
\end{prop}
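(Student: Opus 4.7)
The plan is to deduce the proposition directly from Lemma \ref{lemma1}, unpacking the quantifier ``all configurations'' built into the definition of a midpoint secure manifold. No new geometric input is needed; the argument is purely logical.

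For the forward direction, I would assume $M_1 \times M_2$ is midpoint secure, meaning every configuration in the product is midpoint secure. Given an arbitrary configuration $(x_1, y_1)$ in $M_1$, I would pick any points $x_2, y_2 \in M_2$ (for instance $x_2 = y_2$ some fixed basepoint) and consider the configuration $((x_1, x_2), (y_1, y_2))$ in $M_1 \times M_2$. This is midpoint secure by hypothesis, so Lemma \ref{lemma1} immediately yields that its projection $(x_1, y_1)$ to $M_1$ is midpoint secure. Since $(x_1, y_1)$ was arbitrary, $M_1$ is midpoint secure, and symmetrically so is $M_2$.

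For the converse, I would assume each factor is midpoint secure and take any configuration $((x_1, x_2),(y_1, y_2))$ in $M_1 \times M_2$. Its projections $(x_1, y_1)$ and $(x_2, y_2)$ are midpoint secure in their respective factors by hypothesis, so Lemma \ref{lemma1} gives that the original configuration is midpoint secure in the product. As the configuration was arbitrary, $M_1 \times M_2$ is midpoint secure.

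There is no genuine obstacle here; the only thing to be mindful of is the bookkeeping around constant paths and the case $x_i = y_i$, but Lemma \ref{lemma1} has already absorbed that subtlety into its statement, so the proposition follows as a one-line corollary.
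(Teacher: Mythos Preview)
Your proposal is correct and matches the paper's approach exactly: the paper simply states that the proposition ``immediately follows'' from Lemma \ref{lemma1}, and your argument is precisely the routine unpacking of the quantifier over all configurations that makes this immediate.
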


Regarding the security on product manifolds this result says that the product of midpoint secure
configurations (or manifolds) is secure. But what if one, or both, is not midpoint secure? We will
analyze the case when one of the factors is a round $S^2$ to show that in certain cases the condition
on the midpoints is needed.\\

\begin{prop}\label{prop}
Given a Riemannian manifold $M$ the following statements are equivalent:
\begin{enumerate}
\item A configuration in the product manifold $M\times S^2$ is secure if and only if the projection to
each factor is secure.
\item All secure configurations in M are midpoint secure.
\end{enumerate}
\end{prop}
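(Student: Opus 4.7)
The plan is to prove the two implications separately: the direction $(2) \Rightarrow (1)$ is essentially mechanical, reducing to Lemma \ref{lemma1} together with known facts about the round sphere, while the substantive content lies in $(1) \Rightarrow (2)$, which I would argue by contrapositive.

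For $(2) \Rightarrow (1)$, the ``only if'' part of the biconditional in (1) is immediate from Lemma \ref{lemmaBG}. For the ``if'' part, given secure projections $(x_1,y_1)$ in $M$ and $(x_2,y_2)$ in $S^2$, I would apply hypothesis (2) to obtain midpoint security of $(x_1,y_1)$, and then check directly that every secure configuration on the round $S^2$ is midpoint secure: non-antipodal pairs admit only finitely many geodesics (all lying on one great circle) whose midpoints form a finite set, and a pair $(x_2,x_2)$ is midpoint-blocked by the antipode $x_2^*$. Lemma \ref{lemma1} then yields midpoint security, hence security, of the product configuration.

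For $(1) \Rightarrow (2)$ I would contrapose. Starting from a secure configuration $(x_1,y_1)$ in $M$ that fails to be midpoint secure, I pick any $x_2 \in S^2$ with antipode $x_2^*$ and form $(x,y) = ((x_1,x_2),(y_1,x_2))$ in $M \times S^2$. Both projections are secure, namely $(x_1,y_1)$ by hypothesis and $(x_2,x_2)$ via the singleton $\{x_2^*\}$, so it suffices to show that $(x,y)$ itself is insecure. Assume toward contradiction that $B \subset M \times S^2$ is a finite blocking set for $G(x,y)$. For each $\gamma_1 \in G(x_1,y_1)$ I exploit the uncountable family $\{\gamma_1 \times \gamma_2^{\theta}\}_{\theta \in S^1}$, where $\gamma_2^{\theta}$ is the great circle loop based at $x_2$ in tangential direction $\theta$, traversed once and reparametrized on $[0,1]$. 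Every member lies in $G(x,y)$, so must meet $B$. The crux is the following dichotomy: a single point $(p,q)\in B$ blocks the entire $\theta$-family only if $(p,q)=(\gamma_1(1/2), x_2^*)$, and otherwise blocks only finitely many $\theta$. This rests on two observations: $\gamma_2^{\theta}(1/2)=x_2^*$ is independent of $\theta$, while for $t\in(0,1)\setminus\{1/2\}$ the map $\theta \mapsto \gamma_2^{\theta}(t)$ is injective, because distinct great circles through $x_2$ meet only at $x_2$ and $x_2^*$. Uncountability of $S^1$ against finiteness of $B$ then forces $(\gamma_1(1/2), x_2^*)\in B$ for every $\gamma_1\in G(x_1,y_1)$, so $\pi_1(B)$ is a finite midpoint blocking set for $(x_1,y_1)$, contradicting non-midpoint-security.

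The main obstacle I expect is the honest bookkeeping behind the dichotomy. A single $\gamma_1$ may pass through a given $p \in M$ at several interior times, so for each such time $t \neq 1/2$ one must separately invoke the injectivity of $\theta \mapsto \gamma_2^{\theta}(t)$ to bound the $\theta$'s blocked by $(p,q)$. Once this counting is pinned down, the uncountable-versus-finite argument delivers the contradiction, and the rest of the proof is essentially formal.
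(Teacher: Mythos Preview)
Your argument is correct in substance and rests on the same geometric fact as the paper's proof: great circles through a point $p\in S^2$ meet only at $p$ and its antipode $q$, so blocking on the $S^2$ factor is forced to occur at time $1/2$. The paper, however, organizes the $(1)\Rightarrow(2)$ direction more economically. Rather than arguing by contrapositive and running an uncountable-family-versus-finite-set pigeonhole over all great circles $\gamma_2^\theta$, it proceeds directly: assuming the product configuration is secure with blocking set $B$, it projects to $B_S=\pi_2(B)$, observes that $B_S$ must contain $q$ and that some single great circle $\sigma$ meets $B_S$ only at $q$, and then uses just $\gamma\times\sigma$ for each $\gamma\in G(x,y)$ to force $\gamma(1/2)\in\pi_1(B)$. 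This avoids your bookkeeping about how many $\theta$'s a given $(p,q)$ can block.

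Two small corrections in your $(2)\Rightarrow(1)$ direction: first, non-antipodal pairs on $S^2$ admit \emph{infinitely} many joining geodesics (arcs winding around the great circle any number of times), not finitely many, though you are right that their midpoints form a finite set (in fact exactly two points, the midpoints of the two arcs). Second, the pair $(x_2,x_2)$ is not midpoint-blocked by $\{x_2^*\}$ alone: a great circle traversed an even number of times has midpoint $x_2$ itself, so the midpoint blocking set is $\{x_2,x_2^*\}$. Neither error affects your conclusion, since midpoint blocking sets are allowed to contain the endpoints.
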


\begin{proof}Assume $1)$ holds and let $(x,y)$ be a secure configuration on M. For any $p\in S^2$ in the
2-sphere let $q$ be its antipodal point. Then $(p,p)$ is (midpoint) secure with blocking set $\{p,q\}$ and
by $1)$ $((x,p),(y,p))$ is secure in $M\times S^2$. Let $B$ be the blocking set for $((x,p),(y,p))$,
and let $B_M=\pi_1(B)$ and $B_S=\pi_2(B)$ be its projection to each factor. By Lemma \ref{lemmaBG}
$B_M$ and $B_S$ are blocking sets for $(x,y)$ and $(p,p)$ respectively. Note that there are infinitely
many simple geodesics in $G(p,p)$ and any pair of them only intersect in $q$, so any finite blocking
set for $(p,p)$ has to contain $q$ and a geodesic in $G(p,p)$ that is only blocked by $q$. Let $\sigma
\in G(p,p)$ be a simple great circle that is only blocked by $q$.

For any $\gamma \in G(x,y)$ the geodesic $\gamma\times\sigma$ is in $G((x,p),(y,p))$ so it has to
pass through a blocking point $b\in B$ at some time $t_0\in (0,1)$. By the definition of $B_S$ we have
$\sigma(t_0)=\pi_2(b)\in B_S$, but $\sigma$ is only blocked by $q$ at time $1/2$. Therefore
$\pi_2(b)=q$,  $t_0=1/2$ and $\gamma(1/2)=\pi_1(b)\in B_M$, so $(x,y)$ is midpoint secure in M.

Conversely if $2)$ holds all secure configurations of $M$ and $S^2$ are midpoint secure, and $1)$
follows
from Lemma \ref{lemmaBG} and Lemma \ref{lemma1}.
\end{proof}

Note that this is a statement about $S^2$ as much as about the manifold $M$. The crucial property being
that there is a configuration (in this case (p,p)) where the midpoint blocking point is needed to block
$G(p,p)$. There are many manifolds that contain such configurations, for example $S^n$ with any metric
of revolution, but all known examples also have insecure configurations.\\


\section{Example}\label{secEx}

We will construct an example of a manifold that contains secure configurations that are not
midpoint secure.\\

Let $C$ be a cylinder of length $l$ and radius $1$, write it as a product of an interval $[0,l]$ and a
circle $S^1$ of radius $1$. Let $H_0$ and $H_l$ be a lower and an upper hemisphere, and attach them to
$C$ by identifying the equators with the the curves $0\times S^1$ and $l\times S^1$ respectively. We
get $N=C\cup H_0\cup H_l/\sim$ where $\sim$ is the identification above.\\

First we need to understand some of the geodesics on $N$. Observe that any geodesic in the cylinder
that reaches $l\times S^1$, forming an angle $\alpha$ with it, goes into $H_l$ where it is a half great
circle that leaves $H_l$ again at its antipodal point, forming the same angle $\alpha$. From the point
of view of the cylinder, any geodesic that leaves it through a point $(l,\theta)$ comes back at the
point $(l,\theta+\pi)$ with the same angle.

Let $\hat{C}=[-l,-0]\times S^1$ be a reflection of $C$, where we denote $0$ by $-0$ to distinguish the
points in $\hat{C}$ from those in $C$. Let $T=C\cup\hat{C}/\sim$ be the torus formed by gluing both
cylinders with a $1/2$ twist, i.e. $(l,\theta)\sim(-l, \theta+\pi)$ and $(0,\theta)\sim(-0,
\theta+\pi)$. Let $p:T\to C$ be the projection $p(t,\theta)=(|t|,\theta)$ for $t\neq -0,-l$.  By the
argument above, the restriction to $C$ of a geodesic in $N$ that goes through a hemisphere and back to
$C$ is the projection under $p$ of a geodesic in $T$ that goes from $C$ to $\hat{C}$.

The geodesics in $N$ with endpoints in the interior of $C$ corresponds to the projections to $C$
of the geodesics in $T$, although the parametrizations usually do not agree.\\

\begin{lemma} Any pair of points in the interior of $C$ form a secure configuration in $N$.
\end{lemma}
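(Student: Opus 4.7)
The plan is to exploit the midpoint security of flat tori (every configuration on a flat torus admits a finite midpoint blocking set of at most four points) and to transfer a $T$-midpoint blocking set to a blocking set in $C\subset N$ via the projection $p:T\to C$. The essential observation is that even when a $T$-midpoint lies in $\hat C$ and is therefore not itself a point of $N$, its $p$-image in $C$ is a point through which the corresponding $N$-geodesic actually passes. Concretely, let $x,y$ be in the interior of $C$, let $\tilde x,\tilde y\in C\subset T$ be their natural copies, and let $\tilde y'\in\hat C$ be the $\hat C$-partner of $\tilde y$ under $p$ (so $p(\tilde y')=y$). The discussion preceding the lemma sets up a bijection between $G(x,y)$ and $T$-geodesics from $\tilde x$ to $\tilde y$ (corresponding to $N$-geodesics that make an even number of hemispherical excursions, including zero) or to $\tilde y'$ (odd number), in such a way that the successive $C$- and $\hat C$-arcs of the $T$-geodesic $\tilde\gamma$ project under $p$ onto the successive cylinder-segments of the associated $\gamma$. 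Since $T$ is a flat torus, each of these two $T$-configurations admits a finite midpoint blocking set of at most four points; let $M\subset T$ be their union and set $B=p(M)\subset C\subset N$, a finite set with $|B|\le 8$.

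To verify that $B$ blocks $G(x,y)$, take any $\gamma\in G(x,y)$ with corresponding $\tilde\gamma$ and midpoint $m\in M$. If $m\in C$, then $p(m)=m$ lies on a $C$-arc of $\tilde\gamma$ that coincides with a cylinder-segment of $\gamma$, so $\gamma$ passes through $m$. If $m\in\hat C$, then $m$ is interior to some $\hat C$-arc of $\tilde\gamma$, whose $p$-image is one of the cylinder-segments of $\gamma$, so $\gamma$ passes through $p(m)\in C$. In either case $\gamma$ meets $B$ at an interior point, so $B$ is a finite blocking set for $G(x,y)$.

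The step I expect to be the most delicate is making the correspondence $\gamma\leftrightarrow\tilde\gamma$ precise enough to justify the claim that each $p$-image of a $\hat C$-arc of $\tilde\gamma$ is a genuine cylinder-segment of $\gamma$ rather than a proper subpath of one; this is precisely where the parametrization mismatch noted in the excerpt enters, and it must be promoted to an honest identification of the traces in $C$ of the two geodesics. A minor technicality is to verify $p(m)\notin\{x,y\}$: when $x\neq y$ one has $\tilde x\neq\tilde y$ in $T$, and a short lattice computation rules out any midpoint of a $T$-geodesic between $\tilde x$ and $\tilde y$ (or between $\tilde x$ and $\tilde y'$) coinciding with $\tilde x$, $\tilde y$, or the $\hat C$-partners of $x$ and $y$; any exceptional configurations can be handled by adjoining one or two extra blocking points without affecting finiteness.
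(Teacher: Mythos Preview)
Your argument is correct and follows essentially the same route as the paper: lift $N$-geodesics between interior points of $C$ to $T$-geodesics ending at one of the two preimages of $y$, invoke the (midpoint) security of the flat torus for each of those two configurations, and project the resulting finite blocking set back to $C$ via $p$. The paper phrases this with ordinary security of $T$ rather than midpoint security and does not dwell on the endpoint or parametrization issues you flag, but those are cosmetic differences; your concern about arcs versus subpaths is unnecessary, since all that is needed is the set-level identity $p(\tilde\gamma)=\gamma\cap C$.
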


\begin{proof}For any pair of points $x_1=(t_1, \theta_1)$ and $x_2=(t_2, \theta_2)$ in the interior of $C$ and any
geodesic $\gamma$ between them in $N$, let $\tilde{\gamma}$ be the geodesic in $T$ that projects to
$\gamma$ with $\tilde{\gamma}(0) =x_1$. By the arguments above the projection of $\tilde{\gamma}$ is
$\gamma \cap C$ so any point that blocks $\tilde{\gamma}$ in $T$ will project to a point that blocks
$\gamma$.

On the other hand, since $\tilde{\gamma}(1)$ projects to $\gamma(1)=x_2$ it is either $x_2$ or
$-x_2=(-t_2, \theta_2)$ and we can identify $G(x_1,x_2)$ with the union of the sets $G_T(x_1,x_2)$ and
$G_T(x_1,-x_2)$ of geodesics in $T$. Since $T$ is a flat torus it is known that it is uniformly secure
with security threshold $4$ (see e.g. \cite{Gu04}). Therefore the set $B$ given by the projection of the $8$
blocking points of $G_T(x_1,\pm x_2)$ in $T$ will block $G(x_1,x_2)$.
\end{proof}

\bigskip
\begin{lemma}Any pair of points in the interior of $C$ form a configuration that is not midpoint secure in $N$.
\end{lemma}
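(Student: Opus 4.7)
The plan is to exhibit, for any pair $x_1, x_2$ in the interior of $C$, an infinite family of geodesics from $x_1$ to $x_2$ in $N$ whose midpoints are pairwise distinct; this immediately rules out any finite midpoint blocking set.

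The key observation is that the bijection with $T$-geodesics from the previous lemma is not an isometry of arclength. If $\tilde{\gamma}$ in $T$ crosses one of the gluing circles at $s=0$ or $s=l$, the corresponding geodesic $\gamma$ in $N$ has length $L_T+\pi$ rather than $L_T$, since the half great-circle in the hemisphere contributes an extra $\pi$. Hence the midpoint of $\gamma$ sits at arclength $(L_T+\pi)/2$ from $x_1$, which is $\pi/2$ beyond the projection of the $T$-midpoint of $\tilde{\gamma}$. Although $T$ is flat (and therefore midpoint secure, with the $T$-midpoints of $G_T(x_1,\pm x_2)$ forming a finite set), this shift depends on the individual geodesic through $L_T$, and as we vary the geodesic the resulting $N$-midpoints need not repeat.

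To make this quantitative I would use the family $\{\tilde{\gamma}_k\}_{k\in\Z}$ of $T$-geodesics from the lift $(t_1,\theta_1)$ of $x_1$ to the lifts $(2l-t_2,\theta_2+\pi+2\pi k)$ of $-x_2$ in the universal cover $\R^2$. Since $t_1,t_2\in(0,l)$, each $\tilde{\gamma}_k$ crosses $s=l$ exactly once, so its $N$-image $\gamma_k$ has a single $H_l$-excursion; moreover the fractional arclength of the crossing is $u_0=(l-t_1)/(2l-t_1-t_2)$, independent of $k$. Assuming WLOG $t_1<t_2$, so that $u_0>1/2$, for all large $k$ the midpoint of $\gamma_k$ lies in the pre-hemisphere $C$-arc at $\tilde{\gamma}_k\bigl((L_T+\pi)/2\bigr)$; expanding in $1/k$ using $L_T\sim 2\pi k$ yields the $s$-coordinate
\[
s_k \;=\; l+\tfrac{t_1-t_2}{2}+\tfrac{2l-t_1-t_2}{4k}+O(1/k^{2}).
\]
The coefficient of $1/k$ is strictly positive, hence $s_k$ is strictly monotone for large $k$, so the midpoints $\gamma_k(1/2)$ are pairwise distinct.

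The only substantial bookkeeping is deciding which arc of $\gamma_k$ contains its midpoint---the pre-hemisphere $C$-arc, the hemisphere $H_l$ itself, or the post-hemisphere arc lifted from $\hat{C}$---since this is governed by the relative sizes of $L_T(k)/2$ and $s_0=u_0 L_T(k)$. The family above sits in the first case for all sufficiently large $k$ under the WLOG assumption, and the symmetric case $t_1>t_2$ lands in the third case with a parallel expansion after applying $p$. The borderline case $t_1=t_2$ forces $u_0=1/2$ and puts each midpoint inside $H_l$; there one checks instead that the entry angle into $H_l$ tends to zero at rate $1/k$, so that the midpoints of the corresponding half great-circles (sitting at ``latitude'' equal to the entry angle) are again infinitely many distinct points. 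In every case, no finite set can midpoint-block $G(x_1,x_2)$.
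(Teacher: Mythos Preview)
Your proposal is correct and follows essentially the same strategy as the paper's proof: both exhibit an infinite family of geodesics from $x_1$ to $x_2$ that cross exactly one hemisphere once, observe that the crossing occurs at a fixed fraction of the $T$-length independent of the winding number, and then split into the cases $t_1\neq t_2$ (midpoint lands in the cylinder, with $t$-coordinate varying with the winding number) and $t_1=t_2$ (midpoint lands in the hemisphere at latitude equal to the entry angle, which varies). The only cosmetic difference is that you route your family through $H_l$ while the paper uses $H_0$.

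One small caution: the step ``the coefficient of $1/k$ is strictly positive, hence $s_k$ is strictly monotone for large $k$'' is not justified by the expansion alone, since the $O(1/k^{2})$ remainder is of the same order as consecutive differences $s_k-s_{k+1}$. It is cleaner to use the exact expression
\[
s_k \;=\; l+\frac{t_1-t_2}{2}+\frac{\pi(2l-t_1-t_2)}{2\,L_T(k)}
\]
and note directly that $L_T(k)$ is strictly increasing in $k$; this gives strict monotonicity of $s_k$ (hence pairwise distinct midpoints) without any asymptotics.
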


\begin{proof} For any pair of points $x_1=(t_1, \theta_1)$ and $x_2=(t_2, \theta_2)$ in the interior of $C$ let
$\tilde{\gamma}_i$ be the geodesic in $T$ from $x_1$ to $-x_2=(-t_2, \theta_2)$ that goes $i$ times
around in the $S^1$ direction and doesn't cross $l\times S^1$, it has length
$\tilde{l_i}=\sqrt{(t_1+t_2)^2+(2\pi i +\theta_2+\pi -\theta_1)^2}$. Let $\gamma_i$ be the
correspondent geodesic in $N$, it begins and ends in $C$ and goes through $H_0$ once so it has length
$l_i=\tilde{l_i}+\pi$.

If $t_1\neq t_2$ we can assume that $t_1> t_2$.  The time that $\gamma_i$ spends in $H_0$, when
$\gamma_i$ is parametrized by $[0,1]$, is an interval of length $\pi/l_i$ beginning at
$(1-\frac{\pi}{l_i})\frac{t_1}{t_1+t_2}$. Since $t_1/(t_1+t_2)>1/2$ and the length $l_i$ grows to
infinity with $i$, for $i$ big enough $(1-\frac{\pi}{l_i})\frac{t_1}{t_1+t_2}>\frac{1}{2}$ so
$\gamma_i$ spends more than half the time before reaching $H_0$. The midpoint is then
$\gamma_i(1/2)=p(\tilde{\gamma}_i(\tilde{l_i}/2l_i))$ that has $t$ coordinate
$(t_1+t_2)\tilde{l_i}/2l_i=(t_1+t_2)(1/2 - \pi/2l_i)$, clearly different. If this configuration is
midpoint secure all this points have to be in the blocking set, making it infinite.

In the case that $t_1=t_2$ the midpoints $\gamma_i(1/2)$ are the midpoints of the restriction
of $\gamma_i$ to $H_0$. We can see that the distance between $\gamma_i(1/2)$ and the equator depends
directly on the angle that $\gamma_i$ makes with it. This angle gets smaller as $i$ increases,
showing that the points $\gamma_i(1/2)$ are all distinct.
\end{proof}

\begin{rem}
This manifold is not secure, for example the configuration $((0,0),(0,\pi))$ is insecure since there
are infinitely many disjoint paths joining them in $H_0$. This leaves open whether all secure manifolds
are also midpoint secure.

Also, as mentioned above, it is only a $C^1$ manifold. It would be interesting to find a smooth example.\\
\end{rem}

From the discussion above we see that the statement 2) of Proposition \ref{prop} doesn't hold.
Therefore there are insecure configurations in $N\times S^2$ that project to secure configurations in
both factors. As pointed out previously, this gives an explicit example of an insecure product of
secure configurations.

For this particular manifold $N$ we can prove the stronger statement below.\\

\begin{prop} For any Riemannian manifold $M$, any configuration in the product $N\times M$ that projects to one of the above configurations is
insecure.
\end{prop}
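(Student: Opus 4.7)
The plan is to extend the insecurity argument of the previous lemma by multiplying the family $\{\gamma_i\}$ of non-midpoint-blocked geodesics in $N$ against a single non-constant geodesic in $M$, then use the finiteness of a hypothetical blocking set together with the geometric distinctness of the points $\gamma_i(t)$ to derive a contradiction.

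Fix any non-constant geodesic $\sigma \in G(y_1, y_2)$ in $M$ (such a $\sigma$ exists whenever $y_1 \neq y_2$, and when $y_1 = y_2$ whenever there is a closed geodesic through $y_1$; I will take this as the standing assumption). Suppose, toward a contradiction, that $B \subset N \times M$ is a finite blocking set for $((x_1,y_1),(x_2,y_2))$. For each $i$ the product $\gamma_i \times \sigma$ is a geodesic in $N \times M$ joining $(x_1,y_1)$ and $(x_2,y_2)$, so it must pass through some $(a_i, c_i) \in B$ at an interior time $t_i \in (0,1)$, which forces $\gamma_i(t_i) = a_i$ and $\sigma(t_i) = c_i$.

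Since $\sigma$ has constant positive speed it is locally injective, so $\sigma^{-1}(c)$ is finite for each $c \in \pi_2(B)$. Thus $T := \bigcup_{c \in \pi_2(B)} \sigma^{-1}(c) \cap (0,1)$ is a finite subset of $(0,1)$ containing every $t_i$. By pigeonhole some $t^\star \in T$ satisfies $t_i = t^\star$ for infinitely many $i$, and along this subsequence all the points $\gamma_i(t^\star)$ lie in the finite set $\pi_1(B)$; a second pigeonhole step produces a single $a^\star \in \pi_1(B)$ with $\gamma_i(t^\star) = a^\star$ for infinitely many $i$.

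The contradiction comes from showing that the points $\gamma_i(t^\star)$ are in fact pairwise distinct for all sufficiently large $i$, mirroring the distinctness calculations of the previous lemma. Let $\tau := t_1/(t_1+t_2)$ denote the limiting hemisphere-entry time. If $t^\star \neq \tau$ then for large $i$ the point $\gamma_i(t^\star)$ lies in the cylinder $C$ and its $t$-coordinate takes the form $t_1 - t^\star(t_1+t_2)(1 + \pi/\tilde{l}_i)$ on the downward segment (or the analogous expression on the upward segment); in either case the dependence on $\tilde{l}_i$ yields distinct values for distinct $i$. If instead $t^\star = \tau$, then $\gamma_i(t^\star)$ lies in $H_0$ at arclength $\tau\pi$ along a half great circle entering the equator at angle $\alpha_i \to 0$, so its latitude $\arcsin(\sin(\tau\pi)\sin\alpha_i)$ is strictly monotone in $i$ and distinguishes the points. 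The main obstacle is precisely this hemisphere case $t^\star = \tau$, where the cylinder $t$-coordinate is no longer a discriminant and one must invoke spherical geometry in $H_0$ together with the angle decay $\alpha_i \to 0$ already used in the insecurity lemma.
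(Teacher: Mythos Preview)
Your argument is correct and follows the same strategy as the paper: fix a single geodesic $\sigma$ in $M$, observe that the finitely many times at which $\sigma$ meets $\pi_2(B)$ give a finite set $T$ of candidate blocking times, and then exploit the fact that for any fixed $t^\star\in(0,1)$ the points $\gamma_i(t^\star)$ are eventually pairwise distinct. The paper phrases the last step as ``find an $i$ for which $\gamma_i^{-1}(B_N)$ avoids $T$'' rather than via your two pigeonhole passes, but these are logically equivalent; in fact your write-up is more complete, since the paper asserts the distinctness (``the set of times when $\gamma_i(t)\in B_N$ is different for each $i$'') without justification, whereas you actually carry out the cylinder/hemisphere case split and the spherical-latitude computation. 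Your explicit flag on the existence of $\sigma$ is also a point the paper leaves implicit. One cosmetic issue: you use $t_i$ both for the cylinder coordinates of $x_1,x_2$ and for the blocking times of $\gamma_i\times\sigma$, which is a notation clash worth cleaning up.
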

\begin{proof} Suppose not, then there is a secure configuration $(x_1,y_1),(x_2,y_2)$ where $x_1$ and $x_2$ are in
the interior of $C$. Let $B$ be a blocking set for this configuration, and $B_M$, $B_N$ its projection
to each factor. Let $\sigma$ be any geodesic in $M$ joining $y_1$ and $y_2$, and let $t_1,\dots t_n$ be
all the times where $\sigma(t_i)\in B_M$. Let $\gamma_i$ be as above, the set of times when
$\gamma_i(t)\in B_N$ is different for each $i$, and we can find an $i$ for which this times are all
different from $t_1,\dots t_n$. Then the geodesic $\gamma_i\times \sigma$ is not blocked
by $B$, giving a contradiction.
\end{proof}


\section{Billiard Tables}\label{secBill}

The concept of security originated in the study of billiards. In this subject a billiard table is a
manifold with boundary, and the trajectories of billiard balls follow geodesics and bounce of the
boundary according to the usual reflection laws. It is particularly related with the illumination
problem that studies which points does a light source illuminates, and which are shaded, when the
boundary is considered as a perfect mirror.

Security, also called finite blocking property, has been studied mainly on flat tables, with smooth or
polygonal boundaries. The examples of secure billiard tables are as limited as the examples of secure
manifolds. For lattice polygonal billiards security is equivalent to being arithmetic (see
\cite{Gu04}) and, to the best of my knowledge, no other examples are known. In fact, it was recently
proved by S. Tabachnikov in \cite{T} that any planar billiard with smooth boundary is insecure.

In this respect, we can consider half of the surface $N$ (with boundary) as a billiard table, cutting
it in either of the two natural ways. This gives examples of billiard tables with many secure
configurations. These tables are not planar and they have geodesic boundary, although the surfaces are
only $C^1$.\\

Let $\tau$ be the closed geodesic that agrees with $(t,0)$ and $(t,\pi)$ in the cylinder and the
corresponding half circles in $H_0$ and $H_l$. And let $M_1$ be the half of $N$ bounded by it (a half
cylinder with quarter spheres attached at the ends).

We can build $N$ from $M_1$ by gluing it with a reflection $\hat{M}_1$ of itself along the boundary, so
we can say that $N$ is a double cover of $M_1$ in a similar way that $T$ is a double cover of $C$. This
cover preserves geodesics, since a geodesic crossing from $M_1$ to $\hat{M}_1$ projects to a geodesic
that bounces off $\tau$. Conversely, any geodesic in $M_1$ can be lifted to $N$, and if it bounces off
the boundary the lift crosses from $M_1$ to its reflection. Therefore this table has similar blocking
properties as $N$, any geodesic blocked in $N$ projects to a geodesic in $M_1$ that is blocked
by the projection of the original blocking point.\\

Let $\sigma$ be the circle $l/2\times S^1$, and $M_2=\{(t,\theta), t\leq l/2\}\cup H_0$ a cylinder with
a half sphere at one end, and boundary $\sigma$. The same argument we used for $M_1$ shows that $N$ is
a geodesic double cover of $M_2$, and thus has similar blocking properties. In particular we have
proved the following lemma.\\

\begin{lemma}
Any pair of points in the flat part of $M_1$ or $M_2$ is secure, and not midpoint secure.

Moreover, any pair of points in the boundary of $M_2$ form a secure configuration.
\end{lemma}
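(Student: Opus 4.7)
The plan is to transfer the conclusions of the two lemmas just proved for $N$ to $M_1$ and $M_2$ via the geodesic double covers $p : N \to M_i$ described above. The key facts are that $p$ is an isometric two-to-one branched cover whose branch locus is $\tau$ for $M_1$ and $\sigma$ for $M_2$; that every geodesic in $M_i$, counted with its bounces off the boundary, lifts to an unbroken geodesic in $N$; and that conversely every geodesic in $N$ projects to a geodesic in $M_i$ of the same arc length.

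For security, let $(x,y)$ be a pair of points in the flat part of $M_i$. Each of $x$ and $y$ has one or two preimages in $N$ (two off the branch locus, one on it), and every such preimage lies in the interior of $C$. Every element of $G_{M_i}(x,y)$ is the $p$-image of some element of $G_N(\tilde x, \tilde y)$ for a choice of preimages $\tilde x, \tilde y$ of $x, y$, giving at most four configurations upstairs. Each of these is finitely blocked by the first lemma of Section \ref{secEx}, and the union of the projections of their blocking sets is a finite blocking set for $(x,y)$ in $M_i$.

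For the failure of midpoint security, I would recycle the infinite family $\gamma_i$ of geodesics in $N$ constructed in the proof of the second lemma of Section \ref{secEx}, each joining fixed preimages of $x$ and $y$ through $H_0$. Their midpoints in $N$ were shown to be pairwise distinct. Since $p$ preserves arc length, each $\gamma_i$ descends to a (bouncing) geodesic in $G_{M_i}(x,y)$ with midpoint $p(\gamma_i(1/2))$. Because $p$ is at most two-to-one, the projected midpoints still form an infinite set, so $G_{M_i}(x,y)$ admits no finite midpoint blocking set.

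For the ``moreover'' statement, a point on $\sigma \subset \partial M_2$ lies on the branch locus $\{t = l/2\}$ of the cover, hence has a unique preimage $\tilde x$ in $N$, which sits in the interior of $C$ since $0 < l/2 < l$. Thus $G_{M_2}(x,y)$ is the $p$-image of $G_N(\tilde x, \tilde y)$, and the first lemma of Section \ref{secEx} again provides a finite blocking set upstairs whose projection blocks $(x,y)$. The main obstacle throughout is really the bookkeeping around the branched covers: once one checks carefully that bouncing geodesics in $M_i$ correspond precisely to unbroken geodesics in $N$ between appropriate preimages and that midpoints transfer under $p$, all three statements follow directly from the two lemmas already established.
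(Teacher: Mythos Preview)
Your argument is correct and is exactly the approach the paper uses: the paper's proof is the short discussion preceding the lemma, which observes that $N$ is a geodesic double cover of each $M_i$ (by reflection across the boundary) so that blocking and midpoint-blocking properties transfer from the two lemmas of Section~\ref{secEx} via lifting and projection. You have simply spelled out more of the bookkeeping (the at-most-four lifted configurations for security, the two-to-one bound for the infinitude of projected midpoints, and the unique preimage of boundary points of $M_2$ in the interior of $C$) than the paper does.
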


Unlike $M_1$ the boundary of $M_2$ is smooth, therefore we have a billiard table with smooth boundary
such that all pairs of points in the boundary can be blocked.  This can't be achieved by smooth planar
billiards, as shown in \cite{T} where insecurity is proved for points in the boundary.\\

If we allow manifolds with corners in the boundary, we can consider a quarter of $N$ cutting it along
both $\tau$ and $\sigma$. Then $N$ is a geodesic 4-fold cover of this region, and therefore they share
blocking properties as with $M_1$ and $M_2$. The same is true for the region bounded by $\tau$ and
$\tau_n$, or by $\tau$, $\tau_n$ and $\sigma$, where $\tau_n$ (for $n\geq 2$)  is the geodesic between
the north and south poles that agrees with $(t,\pi/n)$ in the cylinder.


\end{document}